\documentclass[10pt]{amsart}
\usepackage[english]{babel}
\usepackage{amssymb}
\usepackage{amsmath}
\usepackage{srcltx}
\usepackage{lscape}

\thispagestyle{empty}

\textheight 21.5cm
\textwidth 14cm \topmargin -0.6cm \oddsidemargin 1cm \evensidemargin
1cm

\newtheorem{dummy}{Dummy}

\newtheorem{lemma}[dummy]{Lemma}
\newtheorem{theorem}[dummy]{Theorem}

\newtheorem{corollary}[dummy]{Corollary}

\theoremstyle{definition}

\newtheorem{remark}[dummy]{Remark}

\newcommand{\ignore}[1]{}

\author{A. Owen}
\author{S. Pumpl\"un}

\email{adam.owen@nottingham.ac.uk; susanne.pumpluen@nottingham.ac.uk}
\address{School of Mathematical Sciences\\
University of Nottingham\\ University Park\\ Nottingham NG7 2RD\\
United Kingdom }

\keywords{Skew polynomial ring, reducible skew polynomials, eigenspace,  nonassociative algebra, semisimple Artinian ring.}

\subjclass[2010]{Primary: 17A35; Secondary: 17A60, 17A36, 16S36}


\begin{document}
\title[A generalisation of Amitsur's A-polynomials]
{A generalisation of Amitsur's A-polynomials}

\begin{abstract}
We find examples of polynomials $f\in D[t;\sigma,\delta]$ whose eigenring $\mathcal{E}(f)$ is a central simple algebra over the field $F = C \cap {\rm Fix}(\sigma) \cap {\rm Const}(\delta)$.
\end{abstract}

\maketitle

\section*{Introduction}

Let $K$ be a field of characteristic $0$ and $R=K[t;\delta]$ be the ring of differential polynomials with coefficients in $K$.
In order to derive results on the structure of the left $R$-modules $R/Rf$, Amitsur studied spaces of linear differential operators via differential transformations \cite{Am,Am2,Am3}.
He observed that every central simple algebra  $B$ over a field $F$ of characteristic 0 that is split by an algebraically closed field extension $K$ of $F$, is isomorphic to the eigenspace of some polynomial $f \in K[t;\delta]$, for a suitable derivation $\delta$ of $K$.
This identification of a central simple algebra $B$ with a suitable differential polynomial $f\in K[t;\delta]$ he called  {\it A-polynomial} also holds when $K$ has  prime characteristic $p$ \cite[Section 10]{Am}, \cite{P18}.

Let $D$ be a central division algebra  of degree $d$ over $C$, $\sigma$ an endomorphism of $D$ and $\delta$ a left $\sigma$-derivation of $D$. Our aim is to provide a partial answer to the following generalisation of Amitsur's investigation:
\begin{center}
{\it ``For which polynomials $f$ in a skew polynomial ring $D[t;\sigma,\delta]$ is the eigenring $\mathcal{E}(f)$ a central simple algebra over its subfield $F = C \cap {\rm Fix}(\sigma) \cap {\rm Const}(\delta)$?''}
\end{center}

After the preliminaries in Section 1, we investigate two different setups, always assuming that $f$ has degree $m\geq 1$ and that the minimal left divisor of $f$ is square-free. We look at generalised A-polynomials in $D[t;\sigma]$ in Section 2,
where  $\sigma$ is an automorphism of $D$ with $\sigma^n = \iota_u$ for some $u \in D^{\times}$. Then
$f$ is a generalised A-polynomial in $R$ if and only if $f$ right divides $u^{-1}t^n-a$ for some $a \in F$ (Theorem \ref{Sec 3: Theorem 3}).
If $n$ is prime and not equal to $d$, then $f$ is a generalised A-polynomial in $R$ if and only if one of the following holds:
(i) There exists some $a \in F^{\times}$ such that $ua \neq \prod\limits_{j=1}^n\sigma^{n-j}(b)$ for every $b \in D$, and $f(t)=t^n-ua.$ In this case $f$ is an irreducible polynomial in $R$.
(ii) $m \leq n$ and there exist  $c_1,c_2,\dots,c_{m-1},c_m,b \in D^{\times}$, $c_m=1$, such that  $u^{-1}\prod\limits_{j=0}^{n-1} \sigma^{n-j}(b)\in F^{\times}$, and $f(t) = \prod\limits_{i=1}^m (t-\Omega_{c_i}(b)).$   (Theorem \ref{Sec 3: Theorem 4}). In particular,
 $f$ is a generalised A-polynomial in $R=K[t;\sigma]$, $K$ a field, if and only if $f$ right divides $t^n-a$ in $R$ (Theorem \ref{Sec 3: Theorem 5}).
If moreover $n$ is prime then $f$ is a generalised A-polynomial in $R=K[t;\sigma]$ if and only if one of the following holds:
(i) There exists some $a \in F^{\times}$ such that $a \neq N_{K/F}(b)$ for any $b \in K$, and $f(t)=t^n-a.$ In this case $f$ is  irreducible.
(ii) $m \leq n$ and there exist some constants $c_1,c_2,\dots,c_{m-1},c_m,b \in K^{\times}$ with $c_m=1$, such that
$f(t) = \prod\limits_{i=1}^m (t-\Omega_{c_i}(b))$ (Corollary \ref{Sec 3: Theorem 6}).

In Section 3, we study generalised A-polynomials in $D[t;\delta]$, where $C$ has prime characteristic $p$ and 
 $\delta$ is an algebraic derivation of $D$ with minimum polynomial
$g(t)\in F[t]$ of degree $p^e$  such that $g(\delta) = \delta_c $ for some nonzero $c \in D$. Then
$f$ is a generalised A-polynomial in $D[t;\delta]$ if and only if $f$ right divides $g(t)-(b+c)$ for some $b \in F$. In particular, ${\rm deg}(f) \leq p^e$ (Theorem \ref{Sec 4: Theorem 4}).
 In the special case that $g(t)=t^p-at$, $f$ is a generalised A-polynomial in $R$ if and only if one of the following holds:
(i) $f(t) = h(t) = t^p - at - (b + c)$, and $V_p(\alpha) - a\alpha - (b + c) \neq 0$ for all $\alpha \in D$. In this case $f$ is irreducible in $R$.
(ii) $h(t) = t^p -at - (b + c)$ for some $a,b \in F$, $m \leq p$ and $f(t) = \prod\limits_{i=1}^m (t - \Omega_{c_i}(\alpha))$ for some $c_1,c_2,\dots,c_m,\alpha \in D^{\times}$ with $c_m = 1$, such that $V_p(\alpha) - a\alpha - (b + c) = 0$ (Theorem \ref{Sec 4: Theorem 5}).

\section{Preliminaries} \label{sec:prel}


\subsection{Skew Polynomial Rings} \label{subsec:SPRs} (\cite{MRS01, J96, GW04, Ore})


Let $D$ be a unital associative division algebra over its center $C$,  $\sigma$  an endomorphism of $D$, and  $\delta$  a left $\sigma$-derivation of $D$, i.e. $\delta$ is an additive map on $D$ satisfying $\delta(xy) = \sigma(x)\delta(y) + \delta(x)y$ for all $x,y \in D$. 
For $u \in D^{\times}$, $\iota_u(a)= uau^{-1}$ is called an \emph{inner automorphism} of $D$. If there exists $n \in \mathbb{Z}^+$ such that $\sigma^n = \iota_u$ for some $u \in D^{\times}$, and $\sigma^i$ is a not an inner derivation for $1 \leq i < n$, then $\sigma$ is said to have \emph{finite inner order} $n$. 
For $c \in D$, the derivation $\delta_c(a) = [c,a] = ca - ac$ for all $a \in D$ is called an {\it inner derivation}.  The {\it skew polynomial ring} $R=D[t;\sigma,\delta]$ is the set of skew polynomials $a_mt^m + a_{m-1}t^{m-1} + \dots + a_1t + a_0$ with $a_i \in D$, endowed with term-wise addition and multiplication defined by $ta = \sigma(a)t + \delta(a)$ for all $a \in D$. $R$ is a unital associative ring. If $\delta = 0$, we write $R = D[t;\sigma]$. If $\sigma = {\rm  id}_D$, we write $R = D[t;\delta]$.

For $f(t) = a_mt^m + a_{m-1}t^{m-1} + \dots + a_1t + a_0$ with $a_m \neq 0$, the {\it degree} of $f$, denoted by ${\rm deg}(f)$, is $m$, and by convention ${\rm deg}(0) = - \infty$. If $a_m = 1$, we call $f$ {\it monic}. We have ${\rm deg}(fg) = {\rm deg}(f) + {\rm deg}(g)$ and ${\rm deg}(f+g) \leq {\rm max}({\rm deg}(f),{\rm deg}(g))$ for all $f,g \in R$. A polynomial $f \in R$ is called {\it reducible} if $f=gh$ for some $g,h \in R$ such that ${\rm deg}(g),{\rm deg}(h) < {\rm deg}(f)$, otherwise we call $f$ {\it irreducible}. A polynomial $f \in R$ is called {\it right (resp. left) invariant} if $fR \subseteq Rf$ (resp. $Rf \subseteq fR$), i.e. $Rf$ (resp. $fR$) is a two-sided ideal of $R$. We call $f$ {\it invariant} if it is both right and left invariant.

$R$ is a left principal ideal domain. The {\it left idealiser}  $\mathcal{I}(f) = \lbrace g \in R : fg \in Rf \rbrace$  of  $f\in R$  is the largest subring of $R$ within which $Rf$ is a two-sided ideal. We define the {\it eigenring} of $f$ as $\mathcal{E}(f) =\mathcal{I}(f)/Rf= \lbrace g \in R : {\rm deg}(g) < m \text{ and } fg \in Rf \rbrace.$ A nonzero $f \in R$ is said to be {\it bounded} if there exists another nonzero skew polynomial $f^{\star} \in R$, called a {\it bound} of $f$, such that $Rf^{\star}$ is the unique largest two-sided ideal of $R$ contained in the left ideal $Rf$. Equivalently, a nonzero polynomial in $f \in R$ is said to be bounded, if there exists a right invariant polynomial $f^{\star} \in R$, which is called a bound of $f$, such that $Rf^{\star} = {\rm Ann}_R(R/Rf) \neq \{ 0 \}.$ The annihilator ${\rm Ann}_R(R/Rf)$ of the left $R$-module $R/Rf$ is a two-sided ideal of $R$. When  $f$ is bounded and of positive degree, the nontrivial zero divisors in the eigenspace of $f$ are in one-to-one correspondence with proper right factors of $f$ in $R$:
 If $f$ is bounded and $\sigma\in {\rm Aut}(D)$, then $f$ is irreducible if and only if $\mathcal{E}(f)$ has no non-trivial zero divisors.
 Each non-trivial zero divisor $q$ of $f$ in $\mathcal{E}(f)$ gives a proper factor ${\rm gcrd}(q,f)$ of $f$. 
 \cite[Lemma 3, Proposition 4]{GLN13}.

If $D$ has finite dimension as an algebra over its center $C$, then $R=D[t;\sigma,\delta]$ is either a twisted polynomial ring or a differential polynomial ring \cite[Theorem 1.1.21]{J96}.

\subsection{Generalized A-polynomials}

Unless stated otherwise, from now on let $D$ be a unital associative division ring with center $C$, $\sigma\in {\rm End}(D)$, $\delta$  a left $\sigma$-derivation of $D$, and let $F=C \cap {\rm Fix}(\sigma) \cap {\rm Const}(\delta)$. We are interested in the question:
\begin{center}
``For $f \in R = D[t;\sigma,\delta]$ when is $\mathcal{E}(f)$ a central simple algebra over the field $F$?''
\end{center}

 We call $f\in R$ a {\it generalised A-polynomial} if $\mathcal{E}(f)$ is a central simple algebra over $F$.
For each $v \in D^{\times}$, we define a map $\Omega_v: D \longrightarrow D$ by
$\Omega_v(\alpha) = \sigma(v)\alpha v^{-1} + \delta(v)v^{-1}.$

\begin{lemma}\cite[Lemma 2 for $\sigma = {\rm id}$]{Am}\label{Sec 2: Lemma}
Let $\alpha,\beta \in D$. Then $(t - \alpha) \sim (t - \beta)$ in $D[t;\sigma,\delta]$ if and only if  $\Omega_v(\alpha) = \beta$ for some $v \in D^{\times}$.
\end{lemma}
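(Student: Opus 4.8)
The plan is to translate the similarity relation $\sim$ into its module-theoretic meaning and reduce the problem to a single twisted-linear equation. Recall that $(t-\alpha)\sim(t-\beta)$ means that the left $R$-modules $M = R/R(t-\alpha)$ and $N = R/R(t-\beta)$ are isomorphic. Since $t-\alpha$ and $t-\beta$ are monic of degree $1$, the right division algorithm shows that each of $M$ and $N$ is a left $D$-vector space of dimension $1$, with bases the residue classes $\bar{1}$ and $\bar{1}'$ of the constant polynomial $1$. The relation $ta=\sigma(a)t+\delta(a)$ together with $t\equiv\alpha \pmod{R(t-\alpha)}$ lets me compute the action of $t$ on $M$ explicitly: for $d\in D$ one gets $t\cdot(d\bar{1}) = (\sigma(d)\alpha + \delta(d))\bar{1}$, and similarly $t\cdot(d\bar{1}') = (\sigma(d)\beta+\delta(d))\bar{1}'$ on $N$.

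Next, because $R$ is generated as a ring by $D$ and $t$, a map $\phi\colon M\to N$ is a left $R$-module isomorphism precisely when it is a left $D$-linear isomorphism that commutes with the action of $t$. As $M$ is free of rank $1$ over the division ring $D$, any such $\phi$ is determined by the image of the generator, which I will write as $\phi(\bar{1})=v^{-1}\bar{1}'$; then $\phi$ is an isomorphism if and only if $v\in D^{\times}$. Evaluating the intertwining condition $\phi(t\cdot\bar{1})=t\cdot\phi(\bar{1})$ on the generator and using $t\cdot\bar{1}=\alpha\bar{1}$ yields the single equation $\alpha v^{-1} = \sigma(v^{-1})\beta + \delta(v^{-1})$. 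Thus $(t-\alpha)\sim(t-\beta)$ holds if and only if this equation is solvable for some $v\in D^{\times}$.

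It remains to identify this equation with $\beta=\Omega_v(\alpha)$. Solving for $\beta$ gives $\beta = \sigma(v)\alpha v^{-1} - \sigma(v)\delta(v^{-1})$, so I must simplify the term $-\sigma(v)\delta(v^{-1})$. Applying the $\sigma$-derivation rule to $vv^{-1}=1$ and using $\delta(1)=0$ gives $\delta(v^{-1}) = -\sigma(v)^{-1}\delta(v)v^{-1}$, whence $-\sigma(v)\delta(v^{-1}) = \delta(v)v^{-1}$ and therefore $\beta = \sigma(v)\alpha v^{-1} + \delta(v)v^{-1} = \Omega_v(\alpha)$. Every step in the derivation is reversible and $v$ ranges over all of $D^{\times}$, which delivers both implications at once. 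The computation itself is routine; the only points requiring care are choosing the parametrization $\phi(\bar{1})=v^{-1}\bar{1}'$ (rather than $v\bar{1}'$) so that the inverse lands exactly in the form defining $\Omega_v$, and deriving the identity for $\delta(v^{-1})$ correctly from the twisted product rule.
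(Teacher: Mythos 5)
Your proof is correct, but it takes a genuinely different route from the paper's. The paper's proof is short because it cites Jacobson's criterion \cite[pg.~33]{jacobson1943theory}: for linear polynomials, $(t-\alpha)\sim(t-\beta)$ if and only if there exist $v,w\in D^{\times}$ with $w(t-\alpha)=(t-\beta)v$; it then expands $(t-\beta)v=\sigma(v)t+\delta(v)-\beta v$, compares coefficients to get $w=\sigma(v)$ and $\sigma(v)\alpha=\beta v-\delta(v)$, and reads off $\beta=\Omega_v(\alpha)$. You instead unwind the module-theoretic definition of similarity from scratch: you identify $R/R(t-\alpha)$ and $R/R(t-\beta)$ as one-dimensional left $D$-spaces, compute the semilinear $t$-action $t\cdot(d\bar{1})=(\sigma(d)\alpha+\delta(d))\bar{1}$, and characterize when a $D$-linear bijection intertwines the two actions. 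What your approach buys is self-containedness (you re-derive, rather than cite, the unit-conjugacy criterion) and a concrete picture of the isomorphism; what the paper's buys is brevity. The two are consistent: your intertwining equation $\alpha v^{-1}=\sigma(v^{-1})\beta+\delta(v^{-1})$ rearranges to $(t-\beta)=\sigma(v)(t-\alpha)v^{-1}$, which is exactly the paper's relation with $w=\sigma(v)$, and your simplification via $\delta(v^{-1})=-\sigma(v)^{-1}\delta(v)v^{-1}$ is correct.

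One step you should make explicit. In the ``if'' direction you define $\phi$ by its value on the generator and verify the intertwining condition only at $\bar{1}$; since the $t$-action is semilinear rather than $D$-linear, commuting with $t$ on a generator does not follow formally from $D$-linearity, so ``every step is reversible'' is slightly too quick. The fix is one line, using that the semilinearity rule is identical in both modules: if $\phi(t\cdot\bar{1})=t\cdot\phi(\bar{1})$, then for any $d\in D$,
\[
\phi\bigl(t\cdot(d\bar{1})\bigr)=\phi\bigl(\sigma(d)(t\cdot\bar{1})+\delta(d)\bar{1}\bigr)
=\sigma(d)\bigl(t\cdot\phi(\bar{1})\bigr)+\delta(d)\phi(\bar{1})
=t\cdot\bigl(d\,\phi(\bar{1})\bigr)=t\cdot\phi(d\bar{1}).
\]
With that verification inserted, your argument is complete and fully rigorous.
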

\begin{proof}
$(t-\alpha) \sim (t-\beta)$ is equivalent to the existence of $v,w \in D^{\times}$ such that $w(t-\alpha)=(t-\beta)v$ \cite[pg.~33]{jacobson1943theory}, i.e. there exists $v,w \in D^{\times}$ such that $w(t-\alpha) = \sigma(v)t + \delta(v) - \beta v.$
This is the case if and only if $w=\sigma(v)$ and $w\alpha = \sigma(v)\alpha = \beta v - \delta(v)$. The result follows immediately.
\end{proof}


\section{Generalised A-polynomials in $D[t;\sigma]$}


Let $D$ be a central division algebra over $C$ of degree $d$ and $\sigma$ an automorphism of $D$ of finite inner order $n$, with $\sigma^n = \iota_u$ for some $u \in D^{\times}$.
Let $R=D[t;\sigma]$. Then $R$ has center $F[u^{-1}t^n]\cong F[x]$. For the remainder of this section we suppose that $f \in R$ is a monic polynomial of degree $m\geq 1$ such that the greatest common right divisor of $f$ and $t$ (denoted $(f,t)_r$) is one. Then $f^*\in C(R)$
 \cite[Lemma 2.11]{GLN13}.
We define the \emph{minimal central left multiple of $f$ in $R$} to be the unique  polynomial of minimal degree $h \in C(R)=F[u^{-1}t^n]$ such that $h = gf$ for some $g \in R$, and such that $h(t)=\hat{h}(u^{-1}t^n)$ for some monic $\hat{h}(x) \in F[x]$. Define $E_{\hat{h}}=F[x]/(\hat{h}(x))$.
Since $(f,t)_r=1$, $h(t)$ is a bound of $f$.
 $E_{\hat{h}} = F[x]/(\hat{h}(x))$ is a field if and only if $\hat{h}(x)\in F[x]$ is irreducible.

Since $F[x]$ is a unique factorisation domain, we have
$\hat{h}(x)= \hat{\pi}_1^{e_1}(x)\hat{\pi}_2(x)^{e_2}\cdots \hat{\pi}_z(x)^{e_z}$ for some irreducible polynomials $\hat{\pi}_1,\hat{\pi}_2,\dots, \hat{\pi}_z \in F[x]$ such that $\hat{\pi}_i \neq \hat{\pi}_j$ for $i \neq j$, and some exponents $e_1,e_2,\dots,e_z \in \mathbb{N}$.
Henceforth we assume that $e_1=e_2=\cdots=e_z=1$, i.e. that $\hat{h}$ is square-free.
By the Chinese Remainder Theorem for commutative rings \cite[\textsection{5}]{Co63}
$E_{\hat{h}} \cong  E_{\hat{\pi}_1} \oplus E_{\hat{\pi}_2} \oplus \cdots \oplus E_{\hat{\pi}_z},$
where $E_{\hat{\pi}_i} = F[x]/(\hat{\pi}_i(x))$ for each $i$.
$\mathcal{E}(f)$ is a semisimple algebra over its center $E_{\hat{h}}$ \cite{AO}.
Thus $\mathcal{E}(f)$ has center $F$ if and only if
$z=1$ and $E_{\hat{\pi}_1}=F$, i.e. if and only if $\hat{h}$ is a degree 1  polynomial in $F[x]$. Hence under the global assumption that $\hat{h}$ is square-free, we see that for $f$ to be a generalised A-polynomial it is necessary that $\hat{h}$ be irreducible.
So assume that $\hat{h}$ is irreducible. Then  the eigenspace of $f$ is a central simple algebra over the field $E_{\hat{h}}$:

\begin{theorem}\cite{AO}\label{Sec 3: Theorem 2}
Suppose that $\hat{h}(x)$ is irreducible in $F[x]$. Then $f=f_1f_2\cdots f_l$ where $f_1,f_2,\dots,f_l$  are irreducible polynomials in $R$ such that $f_i \sim f_j$ for all $i,j$. Moreover,
 $$ \mathcal{E}(f) \cong M_l(\mathcal{E}(f_i))$$ is a central simple algebra of degree $s=\frac{ldn}{k}$ over the field $E_{\hat{h}}$ where $k$ is the number of irreducible factors of $h(t)\in R$.
In particular, $deg(\hat{h}) = {\rm deg}(h)/n = \frac{dm}{s}$ and $[\mathcal{E}(f) :F] =  mds.$
\end{theorem}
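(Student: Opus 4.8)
The plan is to realise the eigenring as an endomorphism ring and then read off its structure from the quotient $\overline R := R/Rh$, where $h=\hat h(u^{-1}t^n)$ is the bound of $f$. First I would invoke the standard identification $\mathcal E(f)\cong {\rm End}_R(R/Rf)$: right multiplication $\rho_g\colon r+Rf\mapsto rg+Rf$ by an element $g\in\mathcal I(f)$ is a well-defined left $R$-module endomorphism of $R/Rf$ precisely because $fg\in Rf$. This assignment reverses composition, so it is an anti-isomorphism; however, $M_l(A)^{\rm op}\cong M_l(A^{\rm op})$ and an algebra and its opposite are central simple of the same degree, so the opposites cancel in every structural conclusion below. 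Since $h$ is the minimal central left multiple of $f$, we have $Rh={\rm Ann}_R(R/Rf)$, so $R/Rf$ is a \emph{faithful} module over $\overline R$.

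The square-free hypothesis on $\hat h$ (the module-theoretic content behind the semisimplicity of $\mathcal E(f)$ recorded above, \cite{AO}) guarantees that $R/Rf$ is a semisimple $R$-module, say $R/Rf\cong\bigoplus_i S_i^{m_i}$ with the $S_i$ pairwise non-isomorphic simple modules; then $\mathcal E(f)\cong\prod_i M_{m_i}({\rm End}_R(S_i))$ has centre $\bigoplus_i E_{\hat\pi_i}$. When $\hat h$ is irreducible this centre is the single field $E_{\hat h}$, which forces exactly one summand, i.e. $R/Rf\cong S^{l}$ for one simple module $S$. A composition series of $S^{l}$ refines to a factorisation $f=f_1f_2\cdots f_l$ with each $R/Rf_i\cong S$, so the $f_i$ are irreducible and, by the correspondence between similarity of irreducible polynomials and isomorphism of the cyclic modules they define (cf. \cite{jacobson1943theory}), pairwise similar. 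Consequently $\mathcal E(f)\cong {\rm End}_R(S^{l})\cong M_l({\rm End}_R(S))\cong M_l(\mathcal E(f_i))$, and since ${\rm End}_R(S)$ is a division ring (Schur) with centre $E_{\hat h}$, $\mathcal E(f)$ is central simple over $E_{\hat h}$.

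It remains to compute the numerical invariants, for which the key input is $[C:F]=n$. Here $\sigma$ restricts to an automorphism of $C$; by Skolem--Noether no proper power $\sigma^i$ with $1\le i<n$ can fix $C$ pointwise, for otherwise $\sigma^i$ would be inner, contradicting that $n$ is the inner order, whereas $\sigma^n=\iota_u$ does fix $C$. Hence $\sigma|_C$ has order exactly $n$ and $C/F$ is cyclic of degree $n$ by Artin's theorem, giving $\dim_F D=nd^2$. Since $R/Rh$ is a free left $D$-module of rank $\deg h=n\,\deg\hat h$ and $\dim_F E_{\hat h}=\deg\hat h$, comparing $F$-dimensions yields $\dim_{E_{\hat h}}\overline R=\deg(h)\cdot\dim_F D/\deg\hat h=n^2d^2$. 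Because $S$ is a \emph{faithful} simple $\overline R$-module, $\overline R$ is a primitive Artinian ring, hence simple: $\overline R\cong M_N(\Delta')$ with $\Delta'\cong{\rm End}_R(S)^{\rm op}$ and $\overline R\cong S^{N}$. The composition length $N$ of $\overline R$ equals the number $k$ of irreducible factors of $h$ in $R$ (chain $R\supset Rp_1\supset Rp_1p_2\supset\cdots\supset Rh$), and $\deg\overline R=N\deg\Delta'=nd$ from the dimension count, so $\deg{\rm End}_R(S)=nd/k$ and $\deg\mathcal E(f)=l\cdot(nd/k)=ldn/k=:s$. Finally, comparing $F$-dimensions in $R/Rf\cong S^{l}$ gives $\deg\hat h=dm/s$ (equivalently $\deg h/n$), whence $[\mathcal E(f):F]=s^2\deg\hat h=mds$.

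The main obstacle is the passage from the cited semisimplicity to the faithful isotypic decomposition $R/Rf\cong S^{l}$ and the attendant fact that $\overline R$ is simple Artinian: the faithfulness of the simple module $S$ over $\overline R$ (coming from $Rh$ being exactly the annihilator) is what upgrades "primitive Artinian'' to "simple'', and this is precisely the step that pins down both the matrix size and the division algebra ${\rm End}_R(S)$. Once $\overline R\cong M_N(\Delta')$ is established and $[C:F]=n$ is in hand, the identities for $s$, $\deg\hat h$, and $[\mathcal E(f):F]$ are routine dimension counts.
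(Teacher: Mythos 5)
The paper does not actually prove this theorem: it is quoted verbatim from the thesis \cite{AO}, so there is no in-paper argument to compare against. Judged on its own merits, your reconstruction is sound and is almost certainly the intended route: identify $\mathcal{E}(f)$ with $\mathrm{End}_R(R/Rf)$ (up to opposites, which you handle correctly), show $R/Rf$ is isotypic semisimple, read off the factorisation $f=f_1\cdots f_l$ and $\mathcal{E}(f)\cong M_l(\mathcal{E}(f_i))$ from a composition series, and get the numerics from $[C:F]=n$ (your Skolem--Noether argument here is correct, and is exactly where finite degree $d$ enters) together with $\dim_{E_{\hat h}}(R/Rh)=n^2d^2$. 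All the dimension counts check: $\deg \hat h = mk/(ln)=dm/s$ and $[\mathcal{E}(f):F]=s^2\deg\hat h=mds$.

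Two blemishes, both repairable. First, your bridge from the recorded fact (``$\mathcal{E}(f)$ is a semisimple algebra with centre $E_{\hat h}$'') to the statement you actually use (``$R/Rf$ is a semisimple $R$-module'') is not a valid implication in general: a module with semisimple endomorphism ring need not be semisimple (a non-split uniserial module of length $2$ can have endomorphism ring a field). The clean fix stays inside your own framework: apply the recorded fact to $f=h$ itself. Since $Rh$ is two-sided, $\mathcal{I}(h)=R$ and $\mathcal{E}(h)=R/Rh=\overline{R}$, so the cited fact says precisely that $\overline{R}$ is semisimple with centre $E_{\hat h}$; when $\hat h$ is irreducible that centre is a field, so $\overline{R}$ is simple Artinian. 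Then every $\overline{R}$-module, in particular $R/Rf$, is automatically isotypic semisimple, $Z(\mathrm{End}_R(S))=E_{\hat h}$ comes for free, and your ``faithful $\Rightarrow$ primitive $\Rightarrow$ simple'' detour becomes unnecessary. Second, your composition chain $R\supset Rp_1\supset Rp_1p_2\supset\cdots\supset Rh$ is not a chain of left ideals (membership $p_1p_2\in Rp_1$ would require $p_1p_2=rp_1$); for $h=p_1p_2\cdots p_k$ the correct chain is $R\supset Rp_k\supset Rp_{k-1}p_k\supset\cdots\supset Rp_1\cdots p_k=Rh$, whose $j$-th factor is $\cong R/Rp_j$. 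With these two corrections your proof is complete modulo the same facts from \cite{AO} that the paper itself takes as given.
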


\begin{theorem}\label{Sec 3: Theorem 3}
Suppose that $\hat{h}(x)$ is irreducible in $F[x]$. Then
$f$ is a generalised A-polynomial in $R$ if and only if $\hat{h}(x) = x - a$ for some $a \in F$ if and only if $f$ right divides $u^{-1}t^n-a$ for some $a \in F$. In particular, if $f$ is a generalised A-polynomial, then $m \leq n$.
\end{theorem}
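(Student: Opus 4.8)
The plan is to read off both equivalences from Theorem \ref{Sec 3: Theorem 2} together with the fact that $C(R) = F[u^{-1}t^n] \cong F[x]$ is a commutative principal ideal domain. First I would establish the equivalence ``$f$ is a generalised A-polynomial $\iff \hat{h}(x) = x-a$''. Since $\hat{h}$ is assumed irreducible, Theorem \ref{Sec 3: Theorem 2} says that $\mathcal{E}(f)$ is a central simple algebra over the \emph{field} $E_{\hat{h}} = F[x]/(\hat{h}(x))$. Hence $\mathcal{E}(f)$ is central simple over $F$ precisely when its center $E_{\hat{h}}$ coincides with $F$; and as $\hat{h}$ is monic and irreducible, $E_{\hat{h}} = F$ holds if and only if ${\rm deg}(\hat{h}) = 1$, that is, $\hat{h}(x) = x - a$ for some $a \in F$.

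For the second equivalence I would exploit the minimality of $h$. The forward direction is immediate: if $\hat{h}(x) = x-a$ then $h(t) = u^{-1}t^n - a$, and since by definition $h$ is a central left multiple of $f$ (so that $f$ right divides $h$), $f$ right divides $u^{-1}t^n - a$. For the converse, suppose $f$ right divides $u^{-1}t^n - a$ for some $a \in F$, and note that $u^{-1}t^n - a \in C(R)$. The key observation is that the set $\mathcal{J} = \lbrace p \in C(R) : f \text{ right divides } p \rbrace$ of central left multiples of $f$ is an ideal of $C(R)$: it is closed under subtraction, and if $p = gf$ with $r \in C(R)$ then $rp = (rg)f$ because $r$ is central, so $f$ right divides $rp$. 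As $f$ is bounded, $\mathcal{J} \neq \lbrace 0 \rbrace$, and $h$ is its element of least degree, so $h$ generates $\mathcal{J}$ and therefore divides every element of it. Transporting this to $F[x]$ shows $\hat{h}(x)$ divides $x - a$; since $\hat{h}$ is monic irreducible and $x-a$ has degree $1$, I conclude $\hat{h}(x) = x - a$.

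The ``in particular'' claim then follows at once: when $\hat{h}(x) = x - a$ we have $h(t) = u^{-1}t^n - a$, a polynomial of degree $n$, and since $f$ right divides $h$ we obtain $m = {\rm deg}(f) \leq {\rm deg}(h) = n$. The one point that needs care is the ideal argument in the converse of the second equivalence: verifying that $\mathcal{J}$ is genuinely an ideal of the commutative ring $C(R)$ and that its minimal-degree element $h$ generates it, which is what legitimises passing from ``$f$ right divides $u^{-1}t^n-a$'' to ``$\hat{h}(x) \mid (x-a)$''. Everything else is a direct application of Theorem \ref{Sec 3: Theorem 2} and elementary degree bookkeeping.
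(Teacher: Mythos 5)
Your proposal is correct and takes essentially the same route as the paper: the first equivalence is read off from Theorem \ref{Sec 3: Theorem 2} (and the discussion of the center $E_{\hat{h}}$ preceding it), and the second equivalence plus the bound $m\leq n$ come from the definition of the minimal central left multiple. The only difference is that you spell out, via the ideal of central left multiples in $C(R)\cong F[x]$, the divisibility argument the paper dismisses as ``easy to see''---a useful elaboration, but not a different approach.
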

\begin{proof}
Suppose that $f$ is a generalised A-polynomial in $R$. By the paragraph preceding Theorem \ref{Sec 3: Theorem 2}, for $f$ to be a generalised  A-polynomial it is necessary that $\hat{h}(x)=x-a$ for some $a \in F$. Conversely if $\hat{h}(x)=x-a \in F[x]$, then $E_{\hat{h}} = F[x]/(x-a) = F$. Hence $\mathcal{E}(f)$ is a central simple algebra over $F$ by Theorem \ref{Sec 3: Theorem 2}, i.e. $f$ is a generalised A-polynomial. It is easy to see that $\hat{h}(x) = x-a$ is equivalent to $f$ being a right divisor of $u^{-1}t^n-a$ by definition of the minimal central left multiple. Moreover, if $f$ right divides $u^{-1}t^n-a$, then ${\rm deg}(f) \leq   n$.
\end{proof}

For $n$ prime we are able to provide a more concrete description of $f$:

\begin{theorem}\label{Sec 3: Theorem 4}
Suppose that $\hat{h}(x)$ is irreducible in $F[x]$.
Suppose that $n$ is prime and not equal to $d$. Then $f$ is a generalised A-polynomial in $R$ if and only if one of the following holds:
\begin{enumerate}
\item There exists some $a \in F^{\times}$ such that $ua \neq \prod\limits_{j=1}^n\sigma^{n-j}(b)$ for every $b \in D$, and $f(t)=t^n-ua.$ In this case $f$ is an irreducible polynomial in $R$.
\item  $m \leq n$ and there exist  $c_1,c_2,\dots,c_{m-1},c_m,b \in D^{\times}$, $c_m=1$, such that  $u^{-1}\prod\limits_{j=0}^{n-1} \sigma^{n-j}(b)\in F^{\times}$, and
    $f(t) = \prod\limits_{i=1}^m (t-\Omega_{c_i}(b)).$
\end{enumerate}
\end{theorem}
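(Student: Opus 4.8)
The plan is to reduce everything to Theorem \ref{Sec 3: Theorem 3} and then read off the two cases from the factorisation of $t^n-ua$ when $n$ is prime. By Theorem \ref{Sec 3: Theorem 3}, under the standing hypothesis that $\hat h$ is irreducible, $f$ is a generalised A-polynomial if and only if $f$ right divides $u^{-1}t^n-a$, equivalently $t^n-ua$, for some $a\in F$; since $(f,t)_r=1$ forces $a\neq 0$, we may take $a\in F^{\times}$. So I would devote the proof to classifying the monic right divisors $f$ of degree $m\ge 1$ of $t^n-ua$. First I would record the root/norm dictionary: writing $N_n(\beta)=\sigma^{n-1}(\beta)\cdots\sigma(\beta)\beta$, the linear polynomial $t-\beta$ right divides $t^n-ua$ exactly when $N_n(\beta)=ua$, i.e. $ua=\prod_{j=1}^n\sigma^{n-j}(\beta)$, which is the norm in case (1). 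A short computation using $\sigma^n=\iota_u$ gives $u^{-1}\prod_{j=0}^{n-1}\sigma^{n-j}(\beta)=\beta\,(u^{-1}N_n(\beta))\,\beta^{-1}$, so the condition $u^{-1}\prod_{j=0}^{n-1}\sigma^{n-j}(b)\in F^{\times}$ in case (2) is equivalent to $u^{-1}N_n(b)\in F^{\times}$, i.e. to $b$ being a right root of $t^n-ua$ with $a=u^{-1}N_n(b)$. By Lemma \ref{Sec 2: Lemma}, $t-\beta\sim t-b$ exactly when $\beta=\Omega_c(b)$, and similar linear factors share the same minimal central left multiple and hence the same norm.

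The structural heart is a dichotomy: for $a\in F^{\times}$ the polynomial $t^n-ua$ is either irreducible in $R$ or splits into $n$ pairwise similar linear factors. Here I would apply Theorem \ref{Sec 3: Theorem 2} to the central bound $u^{-1}t^n-a$, whose associated $\hat h=x-a$ is trivially irreducible: its irreducible factors in $R$ are all similar and of a common degree $n/k$, where $k$ is their number. Since $n$ is prime, $n/k\in\{1,n\}$, so either $k=1$ and $t^n-ua$ is irreducible, or $k=n$ and $t^n-ua$ is a product of $n$ similar linear factors. I expect the hypothesis $n\neq d$ to be used precisely here, in pinning down the degree and index of the simple quotient $R/R(u^{-1}t^n-a)$ so that this clean dichotomy holds. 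In particular, $t^n-ua$ is reducible if and only if it has a right root, i.e. if and only if $ua=N_n(b)$ for some $b\in D$.

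With the dichotomy in hand the two cases follow. If $t^n-ua$ is irreducible, its only monic right divisor of degree $\ge 1$ is itself, so $f=t^n-ua$ and $m=n$; irreducibility is equivalent to $ua\neq\prod_{j=1}^n\sigma^{n-j}(b)$ for all $b$, giving (1). If instead $t^n-ua$ splits, any right divisor $f$ of degree $m\le n$ is a product $f=(t-\beta_1)\cdots(t-\beta_m)$ of linear factors; taking $b:=\beta_m$ (a right root) gives $c_m=1$, and since all factors are similar to $t-b$, Lemma \ref{Sec 2: Lemma} yields $\beta_i=\Omega_{c_i}(b)$, together with the norm condition $u^{-1}N_n(b)\in F^{\times}$, which is (2). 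For the converses: from (1), ``not a norm'' means no right root, hence $t^n-ua$ is irreducible by the dichotomy and $f$ divides it; from (2), the norm condition places every factor in the single similarity class of $t-b$ with $a=u^{-1}N_n(b)\in F^{\times}$, so the minimal central left multiple of $f$ is $(x-a)$-primary, and the standing irreducibility of $\hat h$ collapses it to $x-a$, whence $f$ right divides $t^n-ua$ and Theorem \ref{Sec 3: Theorem 3} applies.

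The delicate points I anticipate are twofold. The first is establishing the prime-$n$ factorisation dichotomy with the correct index bookkeeping; this is where $n$ prime and $n\neq d$ are genuinely needed, via the structure of $R/R(u^{-1}t^n-a)$ and Theorem \ref{Sec 3: Theorem 2}. The second, and the real obstacle, is the converse of (2): one must show the displayed product actually right divides $t^n-ua$. Arguing divisibility of the product factor by factor can fail for uncoordinated choices of the $c_i$, so the clean route is to avoid it entirely and instead compute that, all factors lying in one similarity class, the minimal central left multiple is $(x-a)$-primary; the standing hypothesis that $\hat h$ is irreducible then forces $\hat h=x-a$ and hence $f\mid_r t^n-ua$.
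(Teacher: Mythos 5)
Your proof is correct and shares its skeleton with the paper's: both reduce via Theorem \ref{Sec 3: Theorem 3} to the condition that $f$ right divides $u^{-1}t^n-a$ with $a\in F^{\times}$, use the degree bookkeeping of Theorem \ref{Sec 3: Theorem 2} to force $k\in\{1,n\}$ when $n$ is prime, and then read off the two cases from the norm criterion for linear right factors together with Lemma \ref{Sec 2: Lemma}. Two differences are worth recording. First, the paper runs the count $ldn=ks$, $dm=s$ on $f$ itself and then quotes Corollary 3.4 of \cite{brown2018} both for the irreducibility criterion in case (1) and for the equivalence $(t-b)\vert_r(t^n-ua)\Leftrightarrow u^{-1}\prod_{j=0}^{n-1}\sigma^{n-j}(b)=a\in F^{\times}$; you instead obtain the irreducible-or-split dichotomy by applying Theorem \ref{Sec 3: Theorem 2} to $t^n-ua$ and the root condition by the remainder computation, which makes the argument self-contained. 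The hypothesis $n\neq d$ enters the paper only as a hypothesis of that cited corollary, not through any computation visible in the paper's proof --- which explains why you could not locate where it is needed: on your route it is never invoked. Second, and in your favour: the paper's written proof carries out only the forward implication, whereas you explicitly close the converse. The point you isolate for case (2) --- that a product of factors all similar to $t-b$ has $(x-a)$-primary minimal central left multiple (the bound of a product divides the product of the bounds), which the standing square-free/irreducibility assumption on $\hat{h}$ collapses to $x-a$, whence $f$ right divides $u^{-1}t^n-a$ and Theorem \ref{Sec 3: Theorem 3} applies --- is exactly the argument needed there, and it is valid. Your identity $u^{-1}\prod_{j=0}^{n-1}\sigma^{n-j}(b)=b\,\bigl(u^{-1}N_n(b)\bigr)\,b^{-1}$, identifying the condition in (2) with $b$ being a right root of $t^n-ua$ for some $a\in F^{\times}$, also checks out, since conjugation fixes elements of the central subfield $F$.
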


\begin{proof}
By Theorem \ref{Sec 3: Theorem 3}, $f$ is a generalised A-polynomial in $R$ if and only if $f$ right divides $u^{-1}t^n-a$ for some $a \in F^{\times}$.
So suppose that $f$ is a generalised A-polynomial in $R$, then there exists some $a \in F^{\times}$ and some  nonzero $g \in R$ such that
\begin{equation}\label{f right divides u^-1t^n-a}
u^{-1}t^n-a = gf.
\end{equation}
In the notation of Theorem \ref{Sec 3: Theorem 2}, $ldn = ks$ and since $f$ is a generalised A-polynomial ${\rm deg}(\hat{h})=\frac{dm}{s} =1$, i.e.  $dm=s$. Combining these yields $\frac{n}{k} = \frac{m}{l} \in \mathbb{N}$. That is $k$ must divide $n$, and so we must have that $k=1$ or $k=n$ as $n$ is prime. We analyse the cases $k=1$ and $k=n$ separately.
\\
First suppose that $k=1$, then $h(t)$ is irreducible. Therefore Equation (\ref{f right divides u^-1t^n-a}) becomes
$u^{-1}t^n-a = gf(t)$
for some $a \in F^{\times}$ and some $g \in D^{\times}$. This yields $g=u^{-1}$ and $f(t) = t^n-ua$ for some $a \in F^{\times}$. Suppose that $f$ were reducible, then $f$ would be the product of $n$ linear factors as $n$ is prime, hence $f$ is irreducible if and only if $ua \neq \prod\limits_{j=1}^n \sigma^{n-j}(b)$ for any $b \in D$, by \cite[Corollary 3.4]{brown2018}.

On the other hand, if $k=n$, then $h(t)$ is equal to a product of $n$ linear factors in $R$, all of which are  similar. Also, since $\frac{n}{k}=\frac{m}{l}$ and $n=k$, we have $m=l \leq n$. Hence $f$ is the product of $m \leq n$ linear factors in $R$, all of which are similar to each other.

So there exist constants $b_1,b_2,\dots,b_m \in D^{\times}$ such that $(t-b_i) \sim (t-b_j)$ for all $i,j \in \{1,2,\dots,m\}$, and
$f(t) = \prod\limits_{i=1}^m (t-b_i).$
 In particular $(t-b_i) \sim (t-b_m)$ for all $i\neq m$, which is true if and only if there exist constants $c_1,c_2,\dots,c_{m-1},c_m \in D^{\times}$ such that $b_i = \Omega_{c_i}(b_m)$ for all $i$ by Lemma \ref{Sec 2: Lemma}. Hence setting $b=b_m$ and $c_m=1$ yields $f(t) = \prod\limits_{i=1}^m (t-\Omega_{c_i}(b)).$ Finally, we note that $(t-b)\vert_r (t^n - ua)$ for some $a \in F^{\times}$ if and only if $u^{-1}\prod\limits_{j=0}^{n-1} \sigma^{n-j}(b) = a \in F^{\times}$, by \cite[Corollary 3.4]{brown2018}.
\end{proof}

If $e_i > 1$ for at least one $i$, then it is not clear to the authors when $\mathcal{E}(f)$ is a central simple algebra over the field $F$.

\subsection{Generalised A-polynomials in $K[t;\sigma]$}

Throughout this section we suppose that $R=K[t;\sigma]$ with $K$ a field, and that $\sigma$ is an automorphism of $K$ of finite order $n$ with fixed field $F$. Now the center of $R$ is $F[t^n] \cong F[x]$.
Let $f \in R$ be of degree $m \geq 1$ and satisfy $(f,t)_r=1$, and suppose that $f$ has minimal central left multiple $h(t)=\hat{h}(t^n)$, $\hat{h} \in F[x]$ an irreducible monic polynomial. Again, we consider only those $f\in R$ where $\hat{h}\in F[x]$ is square-free.

\begin{theorem}\label{Sec 3: Theorem 5}
 $f$ is a generalised A-polynomial in $R$ if and only if $\hat{h}(x) = x - a$ for some $a \in F[x]$ if and only if $f$ right divides $t^n-a$ in $R$.
\end{theorem}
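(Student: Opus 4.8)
The plan is to deduce this statement directly from Theorem \ref{Sec 3: Theorem 3} by specialising that result to the case $D = K$ and $u = 1$, so the main work is to verify that the field setting genuinely falls under the hypotheses of Theorem \ref{Sec 3: Theorem 3} with these choices. First I would observe that a field $K$ is itself a central division algebra over its center $C = K$, of degree $d = 1$. Since every element of $K$ is central, every inner automorphism $\iota_u$ of $K$ is the identity map; hence $\sigma$ having order exactly $n$ as an automorphism is precisely the statement that $\sigma$ has finite inner order $n$ with $\sigma^n = \iota_1 = \mathrm{id}_K$, and we may take the witness $u = 1 \in K^{\times}$. With this choice the center $F[u^{-1}t^n]$ of $R$ becomes $F[t^n]$, matching the setup of this subsection, and the polynomial $u^{-1}t^n - a$ becomes $t^n - a$.

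Next I would confirm that the remaining standing hypotheses of Theorem \ref{Sec 3: Theorem 3} are in force in the present subsection: $f$ is monic of degree $m \geq 1$, satisfies $(f,t)_r = 1$, and has minimal central left multiple $h(t) = \hat{h}(t^n)$ with $\hat{h}$ irreducible and square-free. All of these are assumed throughout this subsection, so in particular the irreducibility hypothesis of Theorem \ref{Sec 3: Theorem 3} holds verbatim.

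With these identifications in place, Theorem \ref{Sec 3: Theorem 3} applies word for word and yields the chain of equivalences: $f$ is a generalised A-polynomial in $R$ if and only if $\hat{h}(x) = x - a$ for some $a \in F$, if and only if $f$ right divides $u^{-1}t^n - a = t^n - a$ for some $a \in F$. This is exactly the claimed statement (reading the $a \in F[x]$ in the middle condition as $a \in F$, as forced by $\hat{h}$ being monic of degree one), and the incidental bound $m \leq n$ carries over as well.

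The only point requiring genuine care, and the sole potential obstacle, is confirming that the notion of finite inner order degenerates correctly in the commutative setting, so that $u = 1$ is a legitimate witness and $\sigma$ of order $n$ really does satisfy the hypothesis $\sigma^i$ non-inner for $1 \leq i < n$ (which here just says $\sigma^i \neq \mathrm{id}_K$). Once this degeneration is checked, no argument separate from the noncommutative case is needed; the theorem is a clean specialisation rather than a new computation.
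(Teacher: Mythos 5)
Your proposal is correct and matches the paper's own argument exactly: the paper proves Theorem \ref{Sec 3: Theorem 5} simply by citing Theorem \ref{Sec 3: Theorem 3}, i.e.\ by the same specialisation $D=K$, $d=1$, $u=1$ (so $\iota_u=\mathrm{id}_K$ and finite inner order $n$ reduces to finite order $n$) that you carry out. Your explicit verification of the degeneration of the hypotheses, and your note that the stated ``$a\in F[x]$'' should read $a\in F$, are just more careful renderings of what the paper leaves implicit.
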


This follows from Theorem \ref{Sec 3: Theorem 3}. If $n$ is prime, then the following is an immediate corollary to both Theorem \ref{Sec 3: Theorem 3} and Theorem \ref{Sec 3: Theorem 4}:

\begin{corollary}\label{Sec 3: Theorem 6}
Let $n$ be prime. Then $f$ is a generalised A-polynomial in $R$ if and only if one of the following holds:
\begin{enumerate}
\item There exists some $a \in F^{\times}$ such that $a \neq N_{K/F}(b)$ for any $b \in K$, and $f(t)=t^n-a.$ In this case $f$ is an irreducible polynomial in $R$.
\item  $m \leq n$ and there exist some constants $c_1,c_2,\dots,c_{m-1},c_m,b \in K^{\times}$ with $c_m=1$, such that 
$f(t) = \prod\limits_{i=1}^m (t-\Omega_{c_i}(b)).$
\end{enumerate}
\end{corollary}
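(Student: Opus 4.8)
The plan is to obtain this corollary by specialising Theorem \ref{Sec 3: Theorem 4} to the case in which the division algebra $D$ is the field $K$ itself. First I would observe that $R=K[t;\sigma]$ is exactly the ring $D[t;\sigma]$ studied in Section 2 with $D=K$; since $K$ is commutative it is a central division algebra over $C=K$ of degree $d=1$, and every inner automorphism $\iota_u$ is the identity. Hence the relation $\sigma^n=\iota_u$ may be taken with $u=1$, giving $\sigma^n={\rm id}$, so that $\sigma$ has finite inner order $n$ precisely when it has order $n$ as an automorphism, and the field $F=C\cap{\rm Fix}(\sigma)\cap{\rm Const}(\delta)$ reduces to $F={\rm Fix}(\sigma)$. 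By Artin's theorem $K/F$ is then a cyclic Galois extension of degree $n$ with Galois group generated by $\sigma$, so that $N_{K/F}(b)=\prod_{i=0}^{n-1}\sigma^i(b)$ for every $b\in K$.

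Next I would verify that the hypotheses of Theorem \ref{Sec 3: Theorem 4} hold in this situation: $\hat{h}$ is irreducible by the standing assumption of the subsection, $n$ is prime by hypothesis, and the requirement $n\neq d$ is automatic since $d=1<2\leq n$. Applying Theorem \ref{Sec 3: Theorem 4} with $u=1$ and $D=K$ then reproduces the two cases, and it only remains to rewrite the products appearing there in terms of the norm. In case (i) the exponents $n-j$ for $j=1,\dots,n$ run over $\{0,1,\dots,n-1\}$, so $\prod_{j=1}^{n}\sigma^{n-j}(b)=\prod_{i=0}^{n-1}\sigma^i(b)=N_{K/F}(b)$; with $u=1$ the polynomial becomes $f(t)=t^n-a$ and the non-representability condition becomes $a\neq N_{K/F}(b)$ for all $b\in K$, as claimed. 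In case (ii), the exponents $n-j$ for $j=0,\dots,n-1$ run over $\{1,\dots,n\}$, and since $\sigma^n={\rm id}$ this again yields $\prod_{j=0}^{n-1}\sigma^{n-j}(b)=\prod_{i=0}^{n-1}\sigma^i(b)=N_{K/F}(b)$, leaving the factorisation $f(t)=\prod_{i=1}^{m}(t-\Omega_{c_i}(b))$ with $c_m=1$ and $m\leq n$.

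The one point requiring attention---and the reason the statement of the corollary is cleaner than that of Theorem \ref{Sec 3: Theorem 4}---is the fate of the side condition $u^{-1}\prod_{j=0}^{n-1}\sigma^{n-j}(b)\in F^{\times}$ in case (ii). Here it reduces to $N_{K/F}(b)\in F^{\times}$, which is automatically satisfied: the norm is Galois-invariant, hence lies in $F$, and it is nonzero whenever $b\in K^{\times}$. Consequently this condition is vacuous in the field case and may simply be omitted. I expect no genuine obstacle in the argument; the only subtlety is this disappearance of the norm condition, and the result is otherwise a direct translation of Theorem \ref{Sec 3: Theorem 4}, with Theorem \ref{Sec 3: Theorem 3} guaranteeing that case (i) indeed produces an irreducible $f$.
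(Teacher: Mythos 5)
Your proposal is correct and follows essentially the same route as the paper: the paper's own proof likewise specialises Theorem \ref{Sec 3: Theorem 4} with $d=u=1$ and observes that the side condition becomes $\prod_{j=0}^{n-1}\sigma^{j}(b)=N_{K/F}(b)\in F^{\times}$, which holds automatically for all $b\neq 0$. Your additional checks (that $n\neq d$ is automatic since $d=1$, and the reindexing of the products via $\sigma^n=\mathrm{id}$) are details the paper leaves implicit but are exactly in the spirit of its argument.
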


\begin{proof}
The proof is identical to the proof of Theorem \ref{Sec 3: Theorem 4} with $d=u=1$. The condition that $\prod\limits_{j=0}^{n-1}\sigma^j(b)$ lies in $F^{\times}$ is always satisfied, since $\prod\limits_{j=0}^{n-1}\sigma^j(b)=N_{K/F}(b)\in F^{\times}$ for all $b \neq 0$.
\end{proof}

In particular, let $K = \mathbb{F}_{q^n}$, where $q=p^e$ for some prime $p$ and exponent $e \geq 1$, and where $\sigma : K\longrightarrow K,$ $a \mapsto a^q$ is the Frobenius automorphism of order $n$, with fixed field $F=\mathbb{F}_q$.  Here the only central division algebra over $\mathbb{F}_q$ is $\mathbb{F}_q$ itself.
 The following result is therefore an easy consequence of Theorems \ref{Sec 3: Theorem 2} and \ref{Sec 3: Theorem 3}:

\begin{corollary}\label{Sec 3: Theorem 7}
Suppose that $f \in \mathbb{F}_{q^n}[t,\sigma]$ satisfies $(f,t)_r=1$, and has minimal central left multiple $h(t) = \hat{h}(t^n)$ for some irreducible polynomial $\hat{h} \in \mathbb{F}_q[x]$. Then $f$ is an A-polynomial if and only if $m \leq n$ and there exist some constants $c_1,c_2,\dots,c_{m-1},c_m,b \in \mathbb{F}_{q^n}^{\times}$ with $c_m=1$, such that
$f(t) = \prod\limits_{i=1}^m (t-\Omega_{c_i}(b)).$ In particular, $f$ is a reducible polynomial in $\mathbb{F}_{q^n}[t,\sigma]$, unless $m=1$.
\end{corollary}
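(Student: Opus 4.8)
The plan is to deduce everything from the equivalences of Theorem \ref{Sec 3: Theorem 3}, specialised to $d = u = 1$ (so that $R = \mathbb{F}_{q^n}[t;\sigma]$ and $F = \mathbb{F}_q$), together with the fact that $\mathbb{F}_q$ admits no nontrivial central division algebra. By Theorem \ref{Sec 3: Theorem 3}, $f$ is a generalised A-polynomial if and only if its minimal central left multiple satisfies $\hat{h}(x) = x - a$ for some $a \in F$, where in fact $a \neq 0$ because $(f,t)_r = 1$ rules out $h(t) = t^n$. Thus the whole corollary reduces to showing that the condition $\hat{h}(x) = x - a$ is equivalent to $f$ having the stated product form, and this is where the two implications will require separate arguments.

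For the forward implication I would assume $f$ is a generalised A-polynomial, so $\hat{h} = x - a$ and $E_{\hat{h}} = F$. Theorem \ref{Sec 3: Theorem 2} then writes $f = f_1 \cdots f_l$ with the $f_i$ pairwise similar irreducibles and $\mathcal{E}(f) \cong M_l(\mathcal{E}(f_i))$ central simple over $F$. Each $\mathcal{E}(f_i)$ is a finite division ring, being the eigenring of an irreducible polynomial, hence a field by Wedderburn's little theorem; since it is central over $F = \mathbb{F}_q$ it must equal $\mathbb{F}_q$. Therefore $\mathcal{E}(f) \cong M_l(\mathbb{F}_q)$ has degree $l$, and comparing with the formulae $s = ldn/k$ and $\deg \hat{h} = dm/s = 1$ from Theorem \ref{Sec 3: Theorem 2} (with $d = 1$) yields $s = l$ and $m = l$. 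Hence $f$ is a product of $m$ irreducible factors of total degree $m$, forcing every factor to be linear, say $f = \prod_{i=1}^m (t - b_i)$ with all $t - b_i$ similar. Applying Lemma \ref{Sec 2: Lemma} to $(t - b_i) \sim (t - b_m)$ and setting $b = b_m$, $c_m = 1$ produces $f = \prod_{i=1}^m (t - \Omega_{c_i}(b))$; the bound $m \leq n$ is immediate from $f \mid_r (t^n - a)$. I expect no real obstruction here beyond invoking Wedderburn to collapse the division part.

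For the converse I would argue at the level of bounds rather than recomputing eigenrings. Given $f = \prod_{i=1}^m (t - \Omega_{c_i}(b))$ with $c_m = 1$, its rightmost factor is $t - b$, so $t - b \mid_r f$ and $Rf \subseteq R(t - b)$; moreover the minimal central left multiple $h$ of $f$ lies in $Rf$, whence $Rh \subseteq Rf \subseteq R(t - b)$. The bound of $t - b$ is $t^n - a_0$ with $a_0 = N_{K/F}(b) \in F^{\times}$, having central image $x - a_0$, and this bound generates the largest two-sided ideal contained in $R(t - b)$. Since $Rh$ is a two-sided ideal sitting inside $R(t - b)$, it follows that $Rh \subseteq R(t^n - a_0)$, i.e. $(t^n - a_0) \mid_r h$; passing to central images gives $x - a_0 \mid \hat{h}$ in $F[x]$. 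As $\hat{h}$ is irreducible by the standing hypothesis, this forces $\hat{h}(x) = x - a_0$, and Theorem \ref{Sec 3: Theorem 3} concludes that $f$ is a generalised A-polynomial. I regard this containment of the minimal central left multiple inside the bound of the similar linear factor as the crux, since it is precisely what lets the irreducibility of $\hat{h}$ do the work.

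Finally, the ``in particular'' clause and the absence of an irreducible analogue of case (i) of Corollary \ref{Sec 3: Theorem 6} follow from finite-field arithmetic: a product of $m \geq 2$ linear factors is reducible, while $m = 1$ recovers the irreducible $f = t - b$; and case (i) cannot occur because the norm $N_{K/F} \colon \mathbb{F}_{q^n}^{\times} \to \mathbb{F}_q^{\times}$ is surjective, so every $a \in F^{\times}$ is a norm and the irreducibility criterion $a \neq N_{K/F}(b)$ is never satisfied. The same surjectivity makes the side condition $\prod_j \sigma^j(b) \in F^{\times}$ from Theorem \ref{Sec 3: Theorem 4}(ii) automatic in this setting.
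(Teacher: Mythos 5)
Your proof is correct, and its forward direction is essentially the paper's intended argument: the paper disposes of this corollary in one line (``the result follows identically to the $n=k$ case in the proof of Theorem \ref{Sec 3: Theorem 4}''), leaning on the preceding remark that the only central division algebra over $\mathbb{F}_q$ is $\mathbb{F}_q$ itself; your application of Wedderburn's little theorem to each $\mathcal{E}(f_i)$, followed by the degree count $s=l=m$ forcing all factors to be linear and then Lemma \ref{Sec 2: Lemma}, is exactly how that remark gets used. You diverge from the paper in two places, both to your advantage. First, Corollary \ref{Sec 3: Theorem 7} does not assume $n$ prime, so the dichotomy $k=1$ or $k=n$ that drives Theorem \ref{Sec 3: Theorem 4} is not available; your Wedderburn-plus-degree-count argument forces $l=m$ (equivalently $k=n$) without any primality, which is precisely the repair the paper's terse citation needs. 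Second, for the converse the paper's proof of Theorem \ref{Sec 3: Theorem 4} only explicitly establishes the ``only if'' half, ending with the remark that the rightmost factor $t-b$ right divides $t^n - N_{K/F}(b)$; your bound-theoretic argument --- $Rh \subseteq Rf \subseteq R(t-b)$, two-sidedness of $Rh$, maximality of $R(t^n - N_{K/F}(b))$ among two-sided ideals inside $R(t-b)$, and then irreducibility of $\hat{h}$ forcing $\hat{h}(x)=x-N_{K/F}(b)$ --- is a complete proof of the implication the paper leaves implicit, and it correctly isolates where the standing hypothesis that $\hat{h}$ is irreducible is indispensable (it genuinely is: $f=(t+1)^2 \in \mathbb{F}_8[t;\sigma]$ has the product form with $c_1=c_2=b=1$, yet its minimal central left multiple is $(t^3+1)^2$, so $\hat{h}=(x+1)^2$ is reducible and the corollary's hypothesis fails). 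Your closing use of the surjectivity of $N_{K/F}$ to rule out an analogue of case (i) of Corollary \ref{Sec 3: Theorem 6} is the same point the paper makes via central division algebras, so no gap there; in short, your write-up is a correct and more self-contained version of the argument the paper compresses into a citation.
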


The result follows identically to the $n=k$ case in the proof of Theorem \ref{Sec 3: Theorem 4}.

\section{Generalised A-polynomials in $D[t;\delta]$}

From now on let $R=D[t;\delta]$ where $D$ is a central division algebra of degree $d$ over $C$.
Assume that $C$ has prime characteristic $p$, and that $\delta$ is an algebraic derivation of $D$ with minimum polynomial
$g(t)=t^{p^e}+ \gamma_1t^{p^{e-1}} + \cdots + \gamma_et \in F[t],$
 such that $g(\delta)(a) = [c,a] = ca-ac$ for some nonzero $c \in D$ and for all $a \in D$. 
 Here, $F = C \cap {\rm Const}(\delta)$ ($D=K$ is a field is included here as special case).
Then $R$ has center $F[g(t)-c] \cong F[x]$.
 For every $f \in R$, the \emph{minimal central left multiple of $f$ in $R$} is the unique  polynomial of minimal degree $h \in C(R)=F[x]$ such that $h = gf$ for some $g \in R$, and such that $h(t)=\hat{h}(g(t)-c)$ for some monic $\hat{h}(x) \in F[x]$.
All $f \in R = D[t;\delta]$ have a unique minimal central left multiple, which is  a bound of $f$.

Again we can restrict our investigation to the case $\hat{h}$ is square-free in $F[x]$, and note that it is necessary that $\hat{h}$ be irreducible in $F[x]$ for $f$ to be a generalised A-polynomial in $R$.

\begin{theorem}\cite{AO}\label{Sec 4: Theorem 3}
Suppose that $\hat{h}(x)$ is irreducible in $F[x]$. Then $f=f_1f_2\cdots f_l$ where $f_1,f_2,\dots,f_l$  are irreducible polynomials in $R$ such that $f_i \sim f_j$ for all $i,j$. Moreover $$ \mathcal{E}(f) \cong M_l(\mathcal{E}(f_i))$$ is a central simple algebra of degree $s=\frac{ldp^e}{k}$ over the field $E_{\hat{h}}$ where $k$ is the number of irreducible factors of $h\in R$.
In particular ${\rm deg}(\hat{h}) = {\rm deg}(h)/p^e = \frac{dm}{s}$ and $[\mathcal{E}(f) :F] =  mds.$
\end{theorem}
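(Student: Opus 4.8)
The plan is to run the module-theoretic argument behind Theorem \ref{Sec 3: Theorem 2} almost verbatim, with the inner order $n$ and the central generator $u^{-1}t^n$ replaced by $p^e={\rm deg}(g)$ and $g(t)-c$; since the two situations are formally parallel, the real content is checking that each structural input survives the passage from $D[t;\sigma]$ to $D[t;\delta]$. First I would record that, as $h$ is the minimal central left multiple of $f$, it is a bound, so $Rh={\rm Ann}_R(R/Rf)$ and $R/Rf$ is a left module over $R/Rh$. Because the centre of $R$ is $F[g(t)-c]\cong F[x]$ and $h=\hat h(g(t)-c)$ with $\hat h$ irreducible, the two-sided ideals of $R$ containing $Rh$ correspond to the divisors of $\hat h$ in $F[x]$; hence $Rh$ is a maximal two-sided ideal and $R/Rh$ is a simple Artinian ring with centre $E_{\hat h}=F[x]/(\hat h)$. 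By Wedderburn--Artin, $R/Rh\cong M_k(\Delta)$ for a central division algebra $\Delta$ over $E_{\hat h}$, where the matrix size is $k$ because the length of $R/Rh$ as a module over itself equals the number $k$ of irreducible factors of $h$ in $R$.

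Next I would transfer this to $f$. Since $R/Rf$ is a module over the simple Artinian ring $R/Rh$, it is semisimple and isotypic, so $R/Rf\cong V^{\oplus l}$, where $V$ is the unique simple $R/Rh$-module and $l$ is the length of $R/Rf$, i.e. the number of factors in a complete factorisation $f=f_1f_2\cdots f_l$ into irreducibles. Each composition factor is isomorphic to $V$, so $R/Rf_i\cong V\cong R/Rf_j$ and therefore $f_i\sim f_j$ for all $i,j$. Using the identification $\mathcal{E}(f)\cong {\rm End}_R(R/Rf)$, I would then compute ${\rm End}_R(V^{\oplus l})\cong M_l({\rm End}_R(V))$; as ${\rm End}_R(V)$ is the division algebra determined by $\Delta$ and is isomorphic to $\mathcal{E}(f_i)$ (because $R/Rf_i\cong V$), this gives $\mathcal{E}(f)\cong M_l(\mathcal{E}(f_i))$, which is central simple over $E_{\hat h}$.

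Finally I would pin down the invariants by dimension counting. Writing $w={\rm deg}(\hat h)$, we have ${\rm deg}(h)=wp^e$ since ${\rm deg}(g(t)-c)=p^e$, and all irreducible factors of $h$ and of $f$ are mutually similar, hence of equal degree, giving $wp^e/k={\rm deg}(h)/k={\rm deg}(f)/l=m/l$. Writing $s=l\,e_0$ for the degree of $\mathcal{E}(f)$, where $e_0$ is the index of $\Delta$ over $E_{\hat h}$, and comparing $\dim_{E_{\hat h}}(R/Rh)$ computed from $M_k(\Delta)$ with $\dim_F(R/Rh)={\rm deg}(h)\cdot\dim_F(D)$, I would obtain $e_0=dp^e/k$ and hence $s=\tfrac{ldp^e}{k}$; the same relations yield ${\rm deg}(\hat h)={\rm deg}(h)/p^e=\tfrac{dm}{s}$ and $[\mathcal{E}(f):F]=s^2[E_{\hat h}:F]=mds$. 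I expect the crux to be exactly this bookkeeping: it hinges on the identity $\dim_F(D)=d^2p^e$ (equivalently $[C:F]=p^e$), which must be read off from the hypothesis that the algebraic derivation $\delta$ has minimum polynomial of degree $p^e$, and on confirming that the index $e_0$ equals $dp^e/k$ exactly rather than merely dividing it.
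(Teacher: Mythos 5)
The paper gives no proof of this theorem to compare against: like its twisted analogue, Theorem \ref{Sec 3: Theorem 2}, it is quoted from the thesis \cite{AO}. Judged on its own merits, your outline is sound and is surely the intended argument: $h$ a bound gives $Rh={\rm Ann}_R(R/Rf)$; irreducibility of $\hat h$ makes $R/Rh$ simple Artinian with centre $E_{\hat h}$, so $R/Rh\cong M_k(\Delta)$ with $k$ the common length of all factorisations of $h$ into irreducibles; $R/Rf$ is then isotypic, $R/Rf\cong V^{\oplus l}$, which yields both the mutual similarity of the $f_i$ and $\mathcal{E}(f)\cong {\rm End}_R(R/Rf)\cong M_l(\mathcal{E}(f_i))$. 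Your bookkeeping is also consistent: all composition factors of $R/Rh$ and $R/Rf$ are isomorphic to $V$, so every irreducible factor of $h$ and of $f$ has degree $\dim_D V$, giving $\deg(h)/k=m/l$; combined with $ke_0=dp^e$ this gives $s=le_0=ldp^e/k$, $\deg(\hat h)=\deg(h)/p^e=km/(lp^e)=dm/s$, and $[\mathcal{E}(f):F]=s^2\deg(\hat h)=mds$.

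Two of your inputs deserve more than a nod, though both hold in this setting, so there is no genuine gap. First, the assertion that the two-sided ideals of $R$ containing $Rh$ correspond to the divisors of $\hat h$ in $F[x]$ (which gives both simplicity of $R/Rh$ and that its centre is exactly $E_{\hat h}$) amounts to $D[t;\delta]$, $\delta$ algebraic, being Azumaya over its centre $F[g(t)-c]$; this is in \cite[Section 1.3]{J96}, and it is precisely where the differential case is cleaner than the twisted case of Section 2, where the prime $(t)$ is ramified and the hypothesis $(f,t)_r=1$ cannot be dropped. Second, the identity $[C:F]=p^e$, which you correctly isolate as the crux, is not something one simply "reads off": it is a theorem. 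For fields it is Jacobson's result that the degree of the minimum $p$-polynomial of an algebraic derivation equals the degree of the field over its constants. For $D$ one restricts $\delta$ to $C$ and lets $g_C\in F[t]$ be the minimum $p$-polynomial of $\delta|_C$, so $[C:F]=\deg g_C$; since $g_C(\delta)$ is a derivation of $D$ vanishing on $C$ it is inner, whence $\deg g\le \deg g_C$ by minimality of $g$, while $g(\delta)=\delta_c$ vanishes on $C$, whence $\deg g_C\le\deg g$ by minimality of $g_C$; hence $[C:F]=\deg g=p^e$. With this in hand your comparison $k^2e_0^2=\dim_{E_{\hat h}}(R/Rh)=\deg(h)\dim_F(D)/\deg(\hat h)=d^2p^{2e}$ gives $e_0=dp^e/k$ exactly, so your worry that the index of $\Delta$ might only divide $dp^e/k$ does not materialise.
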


We obtain the following:

\begin{theorem}\label{Sec 4: Theorem 4}
Suppose that $\hat{h}(x)$ is irreducible in $F[x]$. Then
$f$ is a generalised A-polynomial in $R$ if and only if $f$ right divides $g(t)-(b+c)$ for some $b \in F$. In particular, ${\rm deg}(f) \leq p^e$.
\end{theorem}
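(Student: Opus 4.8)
The plan is to mirror the structure already established in Section 2 for $D[t;\sigma]$, since Theorem \ref{Sec 4: Theorem 3} is the exact differential analogue of Theorem \ref{Sec 3: Theorem 2}. The whole argument reduces to identifying when the center $E_{\hat{h}}$ of $\mathcal{E}(f)$ collapses to $F$ itself, and then translating that condition into a divisibility statement about $g(t)-c$.

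First I would recall that, under the global square-free assumption on $\hat{h}$, the remarks preceding Theorem \ref{Sec 4: Theorem 3} already show that irreducibility of $\hat{h}$ is necessary for $f$ to be a generalised A-polynomial; hence we may assume $\hat{h}$ irreducible and invoke Theorem \ref{Sec 4: Theorem 3} directly. By that theorem, $\mathcal{E}(f)$ is a central simple algebra over the field $E_{\hat{h}}=F[x]/(\hat{h}(x))$. Therefore $\mathcal{E}(f)$ is central simple over $F$ precisely when $E_{\hat{h}}=F$, which holds if and only if $\hat{h}(x)$ is a linear polynomial $x-b$ for some $b\in F$. This is the forward-and-backward equivalence between ``$f$ is a generalised A-polynomial'' and ``$\hat{h}(x)=x-b$''.

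Next I would unwind the definition of the minimal central left multiple in this setting. By definition $h(t)=\hat{h}(g(t)-c)$, so $\hat{h}(x)=x-b$ translates to $h(t)=(g(t)-c)-b=g(t)-(b+c)$. Since $h$ is a central left multiple of $f$, i.e. $h=\tilde{g}f$ for some $\tilde{g}\in R$, the statement $\hat{h}(x)=x-b$ is equivalent to $f$ right dividing $g(t)-(b+c)$; conversely, if $f\mid_r g(t)-(b+c)$, then $g(t)-(b+c)$ is a central multiple of $f$ of the minimal possible central degree, forcing $\hat{h}(x)=x-b$. This is the same bookkeeping used at the end of the proof of Theorem \ref{Sec 3: Theorem 3}, with $u^{-1}t^n$ replaced by the central element $g(t)-c$. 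Finally, the degree bound ${\rm deg}(f)\leq p^e$ is immediate: if $f$ right divides $g(t)-(b+c)$, then ${\rm deg}(f)\leq{\rm deg}(g(t)-(b+c))={\rm deg}(g)=p^e$.

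I expect no single step to be a serious obstacle, as the scaffolding from Section 2 transfers almost verbatim; the only point requiring care is verifying that $g(t)-(b+c)$ really is the minimal central left multiple (and not merely \emph{a} central multiple) when $\hat{h}$ is linear, i.e. that minimality of the degree of $\hat{h}$ is preserved under the substitution $x\mapsto g(t)-c$. Since $g(t)-c$ generates the center $F[x]$ and the map $\hat{h}\mapsto\hat{h}(g(t)-c)$ is a degree-scaling isomorphism onto $C(R)$, minimality in $F[x]$ corresponds exactly to minimality of the central degree in $R$, so this causes no difficulty.
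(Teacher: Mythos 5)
Your proposal is correct and follows essentially the same route as the paper's own proof: invoke Theorem \ref{Sec 4: Theorem 3} to reduce ``generalised A-polynomial'' to ``$\hat{h}(x)=x-b$ is linear,'' then translate this into right divisibility of $g(t)-(b+c)$ via the definition of the minimal central left multiple, with the degree bound following immediately. Your extra care in checking that minimality of $\deg\hat{h}$ in $F[x]$ matches minimality of the central degree in $R$ is exactly the bookkeeping the paper compresses into ``it is easy to see.''
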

\begin{proof}
Suppose that $f$ is a generalised A-polynomial in $R$. For $f$ to be a generalised A-polynomial it is necessary that $\hat{h}(x)=x-b$ for some $b \in F$. Conversely if $\hat{h}(x)=x-b \in F[x]$, then $E_{\hat{h}} = F[x]/(x-b) = F$. Hence $\mathcal{E}(f)$ is a central simple algebra over $F$ by Theorem \ref{Sec 4: Theorem 3}, i.e. $f$ is a generalised A-polynomial. It is easy to see that $\hat{h}(x) = x-b$ is equivalent to $f$ being a right divisor of $g(t)-(b+c)$ by definition of the minimal central left multiple. Moreover, if $f$ right divides $g(t)-(b+c)$, then ${\rm deg}(f) \leq {\rm deg}(g(t)-(b+c)) = p^e$.
\end{proof}

In $D[t;\delta]$, we have
 $(t-b)^p = t^p - V_p(b),$ $ V_p(b) = b^p + \delta^{p-1}(b) + \nabla_b$
 for all $b \in D$, where $\nabla_b$ is a sum of commutators of $b,\delta(b),\delta^2(b),\dots,\delta^{p-2}(b)$ \cite[pg.~17-18]{J96}. In particular, if $D$ is commutative, then  $\nabla_b = 0$ and $V_p(b) = b^p + \delta^{p-1}(b)$ for all $b \in D$. Using the identities $t^p = (t-b)^p + V_p(b)$ and $t = (t-b) + b$ for all $b \in D$, we arrive at:

\begin{lemma}\cite[Proposition 1.3.25 ($e=1$)]{J96}\label{Sec 4: Lemma 1}
Let $f(t)=t^p-a_1t-a_0 \in D[t;\delta]$ and $b \in D$. Then $(t-b)\vert_r f(t)$ if and only if $V_p(b)-a_1b-a_0 = 0.$
\end{lemma}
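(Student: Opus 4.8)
The plan is to reduce the statement to the uniqueness of the remainder under right division by the monic linear polynomial $t-b$. Since $D$ is a division ring and $t-b$ is monic of degree $1$, every $f\in R$ has a unique right-division decomposition $f = q\,(t-b) + r$ with $q\in R$ and remainder $r\in D$ a constant; consequently $(t-b)\vert_r f$ holds exactly when $r=0$. Thus it suffices to compute $r$ and identify it with $V_p(b)-a_1b-a_0$.

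First I would rewrite $f$ in the variable $t-b$ using the two identities supplied just above the lemma, namely $t^p = (t-b)^p + V_p(b)$ and $t = (t-b)+b$. Substituting these into $f(t)=t^p-a_1t-a_0$ and collecting the degree-zero terms gives
\begin{equation*}
f(t) = (t-b)^p - a_1(t-b) + \big(V_p(b) - a_1 b - a_0\big).
\end{equation*}
Next I would factor $t-b$ out on the right from the two terms of positive degree; since $(t-b)^p = (t-b)^{p-1}(t-b)$ this yields
\begin{equation*}
f(t) = \big((t-b)^{p-1} - a_1\big)(t-b) + \big(V_p(b) - a_1 b - a_0\big),
\end{equation*}
and since $V_p(b)-a_1b-a_0\in D$ has degree $<1$, uniqueness of the remainder identifies it as $r$. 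Hence $(t-b)\vert_r f(t)$ if and only if $V_p(b)-a_1b-a_0=0$.

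The computation is short, and the only point demanding care is the noncommutativity of $D$: the scalar $a_1$ must be kept on the left when factoring $a_1(t-b)$, and one relies on the cited identity $t^p=(t-b)^p+V_p(b)$ to guarantee that $V_p(b)$ already absorbs every constant contribution (including the commutator term $\nabla_b$) arising from expanding $t^p$, so that no separate recomputation of $V_p(b)$ is required. The main obstacle is therefore purely organisational: ensuring that all right factors of $t-b$ are collected so that what remains is genuinely a degree-zero element of $D$.
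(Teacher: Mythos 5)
Your proof is correct and follows essentially the same route as the paper, which derives the lemma precisely by substituting the identities $t^p=(t-b)^p+V_p(b)$ and $t=(t-b)+b$ into $f$. Your explicit rearrangement $f(t)=\bigl((t-b)^{p-1}-a_1\bigr)(t-b)+\bigl(V_p(b)-a_1b-a_0\bigr)$, combined with uniqueness of the remainder under right division by the monic polynomial $t-b$, is exactly the intended argument, and your care in keeping $a_1$ on the left is the right handling of noncommutativity.
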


If $e=1$ (i.e. $\delta$ is an algebraic derivation of $D$ of degree $p$), we can determine necessary and sufficient conditions for $f$ to be an A-polynomial in $R$:

\begin{theorem}\label{Sec 4: Theorem 5}
Let $\delta$ be an algebraic derivation of $D$ of degree $p$ with minimum polynomial $g(t)=t^p-at$ such that $g(\delta) = \delta_c$ for some $c \in D$. Suppose that $\hat{h}(x)$ is irreducible in $F[x]$.
Then $f$ is a generalised A-polynomial in $R$ if and only if one of the following holds:
\begin{enumerate}
\item $f(t) = h(t) = t^p - at - (b + c)$, and $V_p(\alpha) - a\alpha - (b + c) \neq 0$ for all $\alpha \in D$. In this case $f$ is irreducible in $R$.
\item $h(t) = t^p -at - (b + c)$ for some $a,b \in F$, $m \leq p$ and $f(t) = \prod\limits_{i=1}^m (t - \Omega_{c_i}(\alpha))$ for some $c_1,c_2,\dots,c_m,\alpha \in D^{\times}$ with $c_m = 1$ (w.l.o.g.) such that $V_p(\alpha) - a\alpha - (b + c) = 0.$
\end{enumerate}
\end{theorem}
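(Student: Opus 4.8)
The plan is to mirror the proof of Theorem \ref{Sec 3: Theorem 4} line by line, substituting the differential ingredients for the twisted ones. The entry point is Theorem \ref{Sec 4: Theorem 4}: as $\hat{h}$ is assumed irreducible, $f$ is a generalised A-polynomial if and only if $f$ right divides $g(t)-(b+c)=t^p-at-(b+c)$ for some $b\in F$, where necessarily $a\in F$ since $g\in F[t]$. Fix such a $b$ and put $h(t)=t^p-at-(b+c)$, the minimal central left multiple of $f$, so that $\hat{h}(x)=x-b$. Assuming that $f$ is a generalised A-polynomial, Theorem \ref{Sec 4: Theorem 3} with $e=1$ gives $s=ldp/k$ and ${\rm deg}(\hat{h})=dm/s=1$, whence $dm=s=ldp/k$ and so $m/l=p/k\in\mathbb{N}$. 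Since $p$ is prime, $k$ divides $p$, forcing $k=1$ or $k=p$; I would analyse these two cases separately, exactly as in Theorem \ref{Sec 3: Theorem 4}.

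If $k=1$, then $h$ is irreducible and $m/l=p$ gives $m=lp$; combined with the degree bound $m\le p$ this forces $l=1$ and $m=p$, so $f=h=t^p-at-(b+c)$. A reducible degree-$p$ polynomial here would have $k\ge2$, hence $k=p$ by primality, hence a complete splitting into $p$ similar linear factors; so $h$ is irreducible precisely when it has no linear right factor, which by Lemma \ref{Sec 4: Lemma 1} means $V_p(\alpha)-a\alpha-(b+c)\neq0$ for all $\alpha\in D$. This is case (1). If $k=p$, then $m/l=1$ gives $m=l\le p$ and $h$ is a product of $p$ pairwise similar linear factors, so $f=\prod_{i=1}^m(t-b_i)$ with $(t-b_i)\sim(t-b_j)$ for all $i,j$. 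Applying Lemma \ref{Sec 2: Lemma} with $\sigma={\rm id}$, so that $\Omega_v(\alpha)=v\alpha v^{-1}+\delta(v)v^{-1}$, each similarity $(t-b_i)\sim(t-b_m)$ supplies $c_i\in D^\times$ with $b_i=\Omega_{c_i}(b_m)$; setting $\alpha=b_m$ and $c_m=1$ yields $f=\prod_{i=1}^m(t-\Omega_{c_i}(\alpha))$, while $(t-\alpha)\vert_r h$ and Lemma \ref{Sec 4: Lemma 1} give $V_p(\alpha)-a\alpha-(b+c)=0$. This is case (2).

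For the converse, case (1) is immediate, since $f=t^p-at-(b+c)=g(t)-(b+c)$ right divides itself and Theorem \ref{Sec 4: Theorem 4} applies. The step I expect to require the most care is the converse of case (2): starting only from a factorisation $f=\prod_{i=1}^m(t-\Omega_{c_i}(\alpha))$ with $V_p(\alpha)-a\alpha-(b+c)=0$, one must show that the entire product right divides $h$, not merely that each factor is similar to the right divisor $(t-\alpha)$ of $h$. The key observation that unlocks this is the global square-free hypothesis on $\hat{h}$: the condition $V_p(\alpha)-a\alpha-(b+c)=0$ gives $(t-\alpha)\vert_r h$ by Lemma \ref{Sec 4: Lemma 1}, so the bound of $(t-\alpha)$ is $h$ and corresponds to the irreducible prime $x-b$; since every linear factor of $f$ is similar to $(t-\alpha)$, all irreducible constituents of $f$ lie in this single similarity class, so the polynomial $\hat{h}_f$ attached to the minimal central left multiple of $f$ is a power of $x-b$, and square-freeness forces $\hat{h}_f=x-b$. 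Hence the minimal central left multiple of $f$ is exactly $h=g(t)-(b+c)$, so $f\vert_r h$ and $f$ is a generalised A-polynomial by Theorem \ref{Sec 4: Theorem 4}. Justifying that all similarity classes collapse to the single prime $x-b$ --- i.e. that the bound of a product of factors each bounded by $h$ again has $\hat{h}_f$ a power of $x-b$ --- is the one point where the argument goes beyond a purely formal translation of the $D[t;\sigma]$ case.
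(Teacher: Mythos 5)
Your proof is correct, and in the forward direction it is essentially the paper's own proof: the same entry through Theorem \ref{Sec 4: Theorem 4}, the same computation $ldp=ks$, $dm=s$ from Theorem \ref{Sec 4: Theorem 3} giving $p/k=m/l\in\mathbb{N}$ and hence $k\in\{1,p\}$, and the same use of Lemma \ref{Sec 4: Lemma 1} and Lemma \ref{Sec 2: Lemma} in the two cases. The genuine difference is your treatment of the converse of case (2). The paper's proof stops after the forward direction, closing only with the remark that $(t-\alpha)\vert_r\,(t^p-at-(b+c))$ if and only if $V_p(\alpha)-a\alpha-(b+c)=0$; that shows the rightmost factor of $f$ divides $h$, but not that $f$ itself does, which is exactly the gap you identify. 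Your completion is the right one: since every factor $t-\Omega_{c_i}(\alpha)$ is similar to $t-\alpha$, the composition factors of $R/Rf$ are all isomorphic to $R/R(t-\alpha)$, whose annihilator is $Rh$ with $\hat{h}=x-b$ prime; the filtration argument gives $Rh^m\subseteq{\rm Ann}_R(R/Rf)$, so the minimal central left multiple of $f$ corresponds to a power of $x-b$, and the standing irreducibility (square-freeness) hypothesis on $\hat{h}$ collapses that power to $x-b$ itself, whence $f\vert_r h$ and Theorem \ref{Sec 4: Theorem 4} applies. The one step you should spell out as a lemma, rather than assert, is that the bound of a product of bounded polynomials divides the product of their bounds (via ${\rm Ann}_R(R/Rf)\supseteq {\rm Ann}_R(R/Rf_1)\cdots{\rm Ann}_R(R/Rf_m)$, using the chain of submodules $Rf_{i+1}\cdots f_m/Rf$ with factors isomorphic to $R/Rf_i$); with that recorded, your argument is complete, and indeed more complete than the paper's, which leaves the sufficiency of (2) implicit.
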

\begin{proof}
By Theorem \ref{Sec 4: Theorem 4}, $f$ is a generalised A-polynomial in $R$ if and only if $f$ right divides $ t^p - at - (b + c)$ for some $b \in F$.
So suppose that $f$ is a generalised A-polynomial in $R$, then there exists some $b \in F$ and some  nonzero $f^{\prime} \in R$ such that
\begin{equation}\label{f right divides  t^p - at - (b + c)}
 t^p - at - (b + c) = f^{\prime}f
\end{equation}
In the notation of Theorem \ref{Sec 4: Theorem 3}, $ldp = ks$ and since $f$ is a generalised A-polynomial, ${\rm deg}(\hat{h})=\frac{dm}{s} =1$, i.e.  $dm=s$. Combining these yields $\frac{p}{k} = \frac{m}{l} \in \mathbb{N}$. That is $k$ must divide $p$, and so we must have that $k=1$ or $k=p$ as $p$ is prime.

First suppose that $k=1$, then $h(t)$ is irreducible in $R$. Therefore Equation (\ref{f right divides  t^p - at - (b + c)}) becomes
$t^p - at - (b + c) = f^{\prime}f$
for some $b \in F^{\times}$ and some $f^{\prime} \in D^{\times}$. This yields $f^{\prime}=1$ and $f(t) =  t^p - at - (b + c)$. Suppose that $f$ were reducible, then $f$ would be the product of $p$ linear factors as $p$ is prime, hence $f$ is irreducible if and only if $V_p(\alpha) - a\alpha - (b + c) \neq 0$ for any $\alpha \in D$, by Lemma \ref{Sec 4: Lemma 1}.

On the other hand, if $k=p$, then $h(t)$ is equal to a product of $p$ linear factors in $R$, all of which are similar to one another. Also, since $\frac{p}{k}=\frac{m}{l}$ and $p=k$, we have $m=l \leq p$. Hence $f$ is the product of $m \leq p$ linear factors in $R$, all of which are mutually similar to each other.

So there exist constants $\alpha_1,\alpha_2,\dots,\alpha_m \in D^{\times}$ such that
$f(t) = \prod\limits_{i=1}^m (t-\alpha_i),$
and $(t-\alpha_i) \sim (t-\alpha_j)$ for all $i,j \in \{1,2,\dots,m\}$.
 In particular $(t-\alpha_i) \sim (t-\alpha_m)$ for all $i\neq m$, which is true if and only if there exist constants $c_1,c_2,\dots,c_{m-1},c_m \in D^{\times}$ such that $\alpha_i = \Omega_{c_i}(\alpha_m)$ for all $i$ by  Lemma \ref{Sec 2: Lemma}. Hence setting $\alpha=\alpha_m$ and $c_m=1$ yields $f(t) = \prod\limits_{i=1}^m (t-\Omega_{c_i}(\alpha)).$ Finally, we note that $(t-\alpha)$ right divides $t^p-at-(b+c)$ if and only if $V_p(\alpha) - a\alpha - (b + c) = 0$ by Lemma \ref{Sec 4: Lemma 1}.
\end{proof}

\begin{remark}
Suppose on the other hand that $C$ has characteristic $0$ and $\delta$ is the inner derivation  $\delta_c$. Then $R$ has center $C[t-c] \cong C[x]$. i.e. $F = C$. In this case the A-polynomials are trivial: if $\hat{h}(x)$ is irreducible in $C[x]$ then
$f$ is a generalised A-polynomial in $R$ if and only if $f(t)=(t-c)+a$ for some $a \in C$.
In this case, $\mathcal{E}(f)=D$.
\end{remark}

\end{document}